\newtheorem{thm}{Theorem}[section]
\theoremstyle{plain}
\newtheorem{prop}[thm]{Proposition}
\newtheorem{lem}[thm]{Lemma}
\newtheorem{cor}[thm]{Corollary}
\theoremstyle{definition}
\newtheorem{defn}[thm]{Definition}
\theoremstyle{remark}
\newtheorem{rem}[thm]{Remark}
\title{The 6-step Solvable Mono-anabelian Reconstruction of Abelian Number Fields}
\author{Yu Mao}
\date{}
\begin{document}
\maketitle
\begin{abstract}
In this paper, we develop a new method to reconstruct an abelian number field $K$ from the maximal $6$-step solvable quotient of $G_K$ group-theoretically. The new aspect of this paper is that the results in this paper are independent from the bi-anabelian results proved in \cite{ST1}.
\end{abstract}
\newpage
\section*{Acknowledgement}
The author would like to thank to Ivan Fesenko and Mohamed Sa\"idi for their valuable comments and discussions.
\section*{Conventions}
Throughout this paper, we shall use the following notions.
\begin{itemize}
\item We write $\Sigma$ for the set of all prime numbers.
\item Let $K$ be a number field, we write $\mathscr{P}_K^{\text{fin}}$ for the set of all finite places of $K$. More generally, if $K/\mathbb{Q}$ is an algebraic extension, then we also write $\mathscr{P}_K^{\text{fin}}$ for the set of all non-archimedean primes of $K$. In particular, if $K/\mathbb{Q}$ is infinite, we have the natural identification $\mathscr{P}_K^{\text{fin}} = \varinjlim_{F} \mathscr{P}_F^{\text{fin}}$ where $F$ ranges over all finite extensions of $\mathbb{Q}$ contained in $K$.
\item Let $G$ be a profinite group. We write $G^{\text{sol}}$ for the maximal pro-solvable quotient of $G$ and $G^{\text{ab}}$ for the maximal abelian quotient of $G$.
\item Let $G$ be a profinite group. We write $G^{[i]}$ for the (closure of the) $i$-th derived subgroup of $G$, i.e. $G^{[0]} := G$, $G^{[i]} := \overline{[G^{[i-1]},G^{[i-1]}]}$ for $i \geq 1$. We write $G^i$ for the maximal $i$-step solvable quotient of $G$. In particular, $G^1 = G^{\text{ab}}$. Moreover, we have the following canonical identification $G^{\text{sol}} = \varprojlim_i G^i$.
\item Let $K$ be a field, fix an algebraic closure $\overline{K}$ of $K$. We write $K^{\text{sep}}$ for the separable closure of $K$ contained in $\overline{K}$. We write $G_K := \text{Gal}(K^{\text{sep}}/K)$ for the absolute Galois group of $K$.
\item Let $K$ be a field, we write $K^{\text{sol}} := (K^{\text{sep}})^{\text{ker}(G_K \twoheadrightarrow G_K^{\text{sol}})}$ for the maximal pro-solvable extension of $K$ contained in $\overline{K}$. Moreover, for each $i \geq 1$, we write $K_i := (K^{\text{sep}})^{G^{[i]}}$ for the maximal $i$-step solvable extension of $K$. In particular, $K_1 = K^{\text{ab}}$ is the maximal abelian extension of $K$.
\item All homomorphisms between profinite groups are assumed to be continuous.
\item Let $A$ be an abelian group, we write $A^{\wedge} := \varprojlim A/nA$, viewed additively.
\item Let $A$ be an abelian group, we write $A_{\text{tor}}$ for the torsion subgroup of $A$, and $A^{/\text{tor}} := A/A_{\text{tor}}$.
\end{itemize}
\section{Introduction}
First, let us recall the $m$-step solvable version of the Neukirch-Uchida Theorem:
\begin{thm}[Sa\"idi-Tamagawa, Theorem 2 in \cite{ST1}]
Let $K,L$ be number fields. Let $m \geq 0$ be an integer. Let $\tau_{m+3}: G_K^{m+3} \xrightarrow{\sim} G_L^{m+3}$ be an isomorphism. Then the followings hold: \par 
(i) There exists a field isomorphism $\sigma_{m} : K_m \xrightarrow{\sim} L_m$ such that $\tau_m(g) = \sigma_mg\sigma_m^{-1}$ for every $g \in G_K^m$, where $\tau_m$ is the isomorphism between $G_K^m$ and $G_L^m$ induced by $\tau_{m+3}$. \par 
(ii) Assume that $m \geq 2$ (resp. $m=1$). Then the isomorphism $\sigma_m: K_m \xrightarrow{\sim} L_m$ is unique (resp. $\sigma: K \xrightarrow{\sim}L$ induced by $\sigma_1: K_1 \xrightarrow{\sim} L_1$).
\end{thm}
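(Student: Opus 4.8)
The plan is to run the Neukirch--Uchida strategy, but entirely inside the solvable quotients at our disposal, exploiting the fact that the local Galois groups at finite primes are ``solvable-rich'' whereas the global group is not. Throughout I would work with the whole tower of induced isomorphisms $\tau_j : G_K^j \xrightarrow{\sim} G_L^j$ for $1 \le j \le m+3$, obtained from $\tau_{m+3}$ by passing to the quotients $G^{m+3} \twoheadrightarrow G^j$.

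The first and decisive task is to recover the finite primes group-theoretically. Via global class field theory I would identify $G_K^1 = G_K^{\mathrm{ab}}$ with the relevant completion of the idele class group $C_K$, so that the local multiplicative data $(K_v^\times)^\wedge$ become visible inside $G_K^1$. The substantive point is to characterise the decomposition groups $D_v \subset G_K^{m+3}$ purely group-theoretically, and then to show that $\tau_{m+3}(D_v)$ is again a decomposition group $D_w$, producing a bijection $\mathscr{P}_K^{\mathrm{fin}} \xrightarrow{\sim} \mathscr{P}_L^{\mathrm{fin}}$ that is compatible with $\tau$. This is where the budget ``$+3$'' is spent: one needs enough solvable layers above the $m$-th to detect, from the group alone, the inertia subgroup, a Frobenius coset, and the residue characteristic and cardinality at each prime, i.e.\ to recover Neukirch's local invariants without appealing to the full absolute Galois group.

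Granting the prime correspondence, I would reconstruct each completion locally. Because the tame quotient of $G_{K_v}$ is metabelian and its wild inertia is pro-$p$, the solvable quotient $G_{K_v}^{m+3}$ still retains the ramification filtration and the local cyclotomic character to the depth we need; a local counterpart of the theorem then yields a field isomorphism $K_v \xrightarrow{\sim} L_w$ at every place, compatible with the restriction of $\tau$ to $D_v$. To assemble these into a global map I would reconstruct the cyclotomic character (already visible in $G_K^1$ through the Galois action on roots of unity, rigidified by the Frobenius elements at the unramified primes) and use the local--global compatibility furnished by class field theory to glue the local isomorphisms into a single field isomorphism $\sigma_m : K_m \xrightarrow{\sim} L_m$ with $\tau_m(g) = \sigma_m g \sigma_m^{-1}$; this is assertion (i).

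For (ii), when $m \ge 2$ I expect uniqueness of $\sigma_m$ to follow from a rigidity argument: two such isomorphisms differ by an automorphism of $K_m$ that fixes $K$ and centralises $G_K^m$ under conjugation, hence by a central element of $G_K^m$, and the centre of $G_K^m$ is trivial once $m \ge 2$. When $m = 1$ the group $G_K^1$ is abelian, conjugation is vacuous, and $\sigma_1$ is genuinely non-unique; nevertheless the induced field isomorphism $\sigma : K \xrightarrow{\sim} L$ is pinned down by the prime bijection and the compatible local isomorphisms already produced, which gives its uniqueness. The step I anticipate as the main obstacle is exactly the group-theoretic characterisation of decomposition groups inside the solvable quotient: the classical proof leans on the cohomological dimension $2$ of local Galois groups and on Brauer-group / invariant-map computations that are unavailable once only $G_K^{m+3}$ is known, so one must engineer a replacement criterion that isolates local subgroups through their solvable structure alone, and it is the cost of this criterion that forces the passage from $m$ to $m+3$.
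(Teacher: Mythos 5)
A point of order first: this paper never proves Theorem 1.1 at all --- it is imported verbatim from \cite{ST1} (indeed the paper's stated purpose is to be \emph{independent} of that result), so your proposal can only be measured against the Sa\"idi--Tamagawa proof itself. In outline you have reproduced its skeleton correctly: everything hinges on a group-theoretic characterisation of decomposition groups at non-archimedean primes inside the solvable quotients, and that characterisation is exactly where the extra layers are spent (it is Theorem 1.25 of \cite{ST1}, quoted as Theorem 2.3 in this paper, recovering the decomposition groups of $G_K^m$ from $G_K^{m+2}$). But your proposal leaves precisely this step as an unfulfilled promise (``one must engineer a replacement criterion''); since you yourself identify it as the main obstacle, what you have is a plan whose central lemma is missing, not a proof.

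Moreover, two of the steps you do describe would fail as stated. First, there is no ``local counterpart of the theorem'': absolute Galois groups of $p$-adic fields do \emph{not} determine the field up to isomorphism (by Yamagata and Jarden--Ritter there exist non-isomorphic local fields with isomorphic absolute Galois groups), and a fortiori their solvable quotients do not; so you cannot produce field isomorphisms $K_v \xrightarrow{\sim} L_w$ and then ``glue'' them. The actual proof never constructs local field isomorphisms: following Uchida, the prime correspondence together with class field theory yields a Galois-equivariant isomorphism of idele groups and idele class groups, hence an isomorphism of multiplicative groups $K_m^{\times} \xrightarrow{\sim} L_m^{\times}$, and one must then prove that this multiplicative map is \emph{additive} --- a substantive step (Uchida's additivity argument) that is entirely absent from your proposal. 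Second, your uniqueness argument for (ii) has a gap: if $\sigma_m$ and $\sigma_m'$ are both compatible with $\tau_m$, then $\phi = \sigma_m^{-1}\sigma_m'$ is an automorphism of the field $K_m$ centralising $\mathrm{Gal}(K_m/K)$; it is not an element of $G_K^m$, since it need not fix $K$ pointwise, so triviality of the centre of $G_K^m$ does not by itself conclude. One must show that the centraliser of $\mathrm{Gal}(K_m/K)$ in $\mathrm{Aut}(K_m)$ is trivial, which in \cite{ST1} again rests on decomposition-group rigidity (and on an Artin--Schreier type finiteness argument), not merely on centre-freeness of $G_K^m$.
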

But neither the statement of Theorem 1.1 nor the proof of Theorem 1.1 provides an explicit reconstruction of the number field $K$ from $G_K^{m}$ for suitable $m$. \par 
On the other hand, in \cite{MS}, there is a mono-anabelian version of Theorem 1.1, i.e. a group theoretic reconstruction of $K_m/K$ starting from $G_K^{m+9}$. However, the result proved in \cite{MS} uses the statement of Theorem 1.1 in an essential way. In particular, Theorem 1 in \cite{MS} does not provide a weaker alternative proof to Theorem 1.1. \par 
On the other hand, Theorem 2 in \cite{MS} does not depend on Theorem 1.1, in particular, Theorem 2 in \cite{MS} does provide an alternative proof to a weaker version of Theorem 1.1 for subfields of imaginary quadratic fields. \par 
In this paper, we develop a new method to recover all abelian number fields from their maximal $6$-step solvable Galois groups. Our method in the present paper is independent from the statement of Theorem 1.1, and different from Theorem 2 in \cite{MS}. In particular, we prove:
\begin{thm}
Let $K$ be an abelian number field (i.e. a finite abelian extension of $\mathbb{Q}$). Then there is a group-theoretic reconstruction of $K$ from $G_K^6$.
\end{thm}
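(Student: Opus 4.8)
My plan is to bypass Theorem 1.1 entirely and instead reconstruct $K$ from its local arithmetic. Concretely, I would read off, functorially from the abstract group $G_K^5$, the decomposition groups of all finite primes, extract from each one the residue characteristic, the residue cardinality and the ramification, and then assemble this into the complete splitting data of every rational prime in $K$. Since $K$ is abelian over $\mathbb{Q}$ (and $\mathbb{Q}\subseteq K$ is intrinsic, being the prime field), Kronecker--Weber together with the classical fact that an abelian number field is uniquely determined by its set of completely split primes will then recover $K$. The emphasis throughout is that each reconstruction is performed directly, from local class field theory and the internal structure of the solvable quotient, rather than by first transporting a field isomorphism as in \cite{ST1}.

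The first step is to isolate, purely group-theoretically, the decomposition subgroups $D_v \subseteq G_K^5$ attached to the primes $v \in \mathscr{P}_K^{\mathrm{fin}}$, together with their inertia subgroups $I_v \subseteq D_v$. Because every local absolute Galois group $G_{K_v}$ is prosolvable, its image already surfaces in a bounded solvable quotient, so one can hope to single out the $D_v$ by a criterion on closed subgroups of $G_K^5$ phrased through the compatibility of their abelianizations with the global abelianization $G_K^1 \cong C_K^{\wedge}$ supplied by class field theory. I would set up this characterization directly, in the spirit of Neukirch--Uchida but in the bounded solvable setting and without invoking any field isomorphism.

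Given $D_v$, local class field theory furnishes $D_v^{\mathrm{ab}} \cong (K_v^{\times})^{\wedge} \cong \widehat{\mathbb{Z}} \times \mu_{q_v-1} \times \mathbb{Z}_{p_v}^{[K_v:\mathbb{Q}_{p_v}]}$. From this I read off the residue characteristic $p_v$ as the unique prime carrying positive free pro-$p$ rank, the residue cardinality $q_v$ from the order $q_v-1$ of the prime-to-$p_v$ torsion, and hence the residue degree $f_v = \log_{p_v} q_v$; the ramification index $e_v$ comes from the reconstructed inertia $I_v \subseteq D_v$. This determines, for each $v$, the rational prime $\ell = p_v$ lying below it (residue characteristic $=$ the rational prime below) and the pair $(e_v,f_v)$; counting the $v$ over a fixed $\ell$ then yields the number $g_\ell$ of primes above $\ell$, i.e. the full decomposition type of $\ell$ in $K$.

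Collecting this over all $v$ gives, for every rational prime $\ell$, the splitting data of $\ell$ in $K$, equivalently the Euler factors of $\zeta_K$. The abelian field $K$ is then the unique abelian number field realizing this data: its conductor and Dirichlet character group inside $(\mathbb{Z}/n)^{\times}$ are pinned down by the ramified primes together with the residue degrees of the unramified ones, and by Kronecker--Weber this recovers $K$ as a field. The main obstacle is the first step: producing a group-theoretic characterization of the decomposition groups inside the bounded quotient $G_K^5$ that is genuinely independent of Theorem 1.1, since the standard treatments entangle this identification with the bi-anabelian machinery. A secondary difficulty is the bookkeeping of how many solvable layers each stage consumes, which is precisely what must be controlled in order to show that five steps suffice.
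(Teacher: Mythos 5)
Your overall architecture (decomposition groups $\to$ local invariants via local class field theory $\to$ abelian-ness and Kronecker--Weber to pin $K$ inside a cyclotomic field) is in fact the same as the paper's, but your proposal has a genuine gap exactly where you admit it: the first step. The group-theoretic characterization of the decomposition subgroups inside the \emph{bounded} solvable quotient $G_K^5$ is not something one can "set up directly in the spirit of Neukirch--Uchida"; Neukirch's characterization of decomposition groups lives in the full group $G_K$ and rests on Brauer-group/cohomological arguments that do not truncate to an $m$-step solvable quotient. Making this work in the truncated setting is precisely the content of Sa\"idi--Tamagawa's local theory (Theorem 1.25 of \cite{ST1}, quoted as Theorem 2.3 in the paper: decomposition groups of $G_K^m$ are recovered from $G_K^{m+2}$), which is a substantial theorem occupying a large part of that paper --- and, importantly, it is mono-anabelian and independent of the bi-anabelian Theorem 1.1, so the paper can import it without circularity. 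Your proposed criterion (compatibility of abelianizations of closed subgroups with $G_K^{\mathrm{ab}} \cong C_K^{\wedge}$) is not a workable substitute as stated: prosolvability of the local groups tells you their images appear in solvable quotients, but it does not give a characterization distinguishing them from other closed subgroups, and no argument is offered. Since you declare this step the "main obstacle" and leave it unresolved, the proof is incomplete at its foundation. A secondary, fixable, gap: even granted the $D_v$, your claim that $D_v^{\mathrm{ab}} \cong (K_v^{\times})^{\wedge}$ and that $e_v, f_v, q_v$ can be read off requires care, because the decomposition groups in $G_K^5$ are quotients of $G_{K_v}^5$ rather than full local Galois groups; this layer-counting is handled in the paper by Theorem 2.8 (adapted from \cite{MS}), which consumes additional solvable steps.

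If you repair the first step by citing the Sa\"idi--Tamagawa local theory rather than reproving it, your remaining argument becomes essentially the paper's proof with one variation at the end. The paper computes the exact conductor $n_G$ of $K$ from the higher ramification groups of the reconstructed local extensions $\mathrm{Gal}(k(v)/\mathbb{Q}_{p(v)})$, reconstructs $\mathbb{Q}(\zeta_{n_G})$ from roots of unity, and cuts out $K$ as the fixed field determined by the kernel of the mod-$n_G$ cyclotomic character. You instead propose to determine $K$ from its set of completely split primes via Chebotarev. That variant can be made to work, but note that the splitting data $(e_\ell, f_\ell, g_\ell)$ alone does not determine the conductor exponents at wildly ramified primes, so you would still need either the paper's ramification-group computation or an a priori bound on the wild exponents (in terms of $v_p(e_p)$) to reduce to finitely many candidate moduli; the paper's use of the cyclotomic character is cleaner than Frobenius bookkeeping and avoids any density argument.
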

\section{Some Local Reconstructions}
In this section, we review the local theory proven by Sa\"idi and Tamagawa in \cite{ST1} and some local reconstructions related to decomposition groups, basically everything in this section are proven in \cite{ST1} and \cite{MS}.
\begin{defn}[c.f. Definition 3.2 in \cite{Ho1}]
A profinite group $G$ is said to be of NF-type if it admits the following data: \par 
(i) A number field $K$ together with its separable closure $K^{\text{sep}}$. \par 
(ii) An isomorphism $\alpha: G \xrightarrow{\sim} G_K = \text{Gal}(K^{\text{sep}}/K)$ of profinite groups. \par 
In this case, we also say that $G$ is a profinite group of NF-type associated to $K$.
\end{defn}
In particular, a profinite group of NF-type should be thought of as an abstract profinite group only, but not a profinite group naturally arises from field theory or scheme theory.
\begin{defn} [c.f. Definition 3.2 in \cite{Ho1}, Definition 2.1 in \cite{MS}]
A profinite group of \text{GSC}-type (resp. $\text{GSC}^m$-type for some positive integer $m$) is a profinite group $G$ isomorphic to the maximal prosolvable (resp. $m$-step solvable quotient) quotient of some profinite group of NF-type.
\end{defn}
We shall recall one of the main results in \cite{ST1}, which will play an important role in our reconstruction:
\begin{thm}[c.f. Theorem 1.25 in \cite{ST1}]
Let $K$ be a number field and $m\geq1$ be an integer. Then the set of decomposition subgroups at non-archimedean primes of $G_K^m$ can be group-theoretically from $G_K^{m+2}$.
\end{thm}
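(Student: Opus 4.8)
The plan is to produce an intrinsic, group-theoretic characterization of those closed subgroups of $G_K^m$ that arise as images of decomposition groups $D_\mathfrak{p} \subseteq G_K$ at the non-archimedean primes $\mathfrak{p} \in \mathscr{P}_K^{\text{fin}}$, using only data visible in $G_K^{m+2}$. The guiding principle is the local-global philosophy behind Neukirch-Uchida: a decomposition group is to be recognized by the fact that it ``looks locally like'' the Galois group of a non-archimedean local field, together with a maximality/compatibility condition singling out the genuine ones among all abstractly local-looking subgroups. The decisive fingerprint is supplied by local class field theory: for $\mathfrak{p}$ over a rational prime $p$, the abelianization of the local group is $\widehat{K_\mathfrak{p}^\times} \cong \widehat{\mathbb{Z}} \times \mu(K_\mathfrak{p}) \times \mathbb{Z}_p^{\,[K_\mathfrak{p}:\mathbb{Q}_p]}$, whose torsion subgroup $\mu(K_\mathfrak{p})$ is a substantial finite group (containing $\mu_{N\mathfrak{p}-1}$) and whose pro-$p$ part has rank the local degree. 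Neither feature is shared by the global abelianization $G_K^{\text{ab}}$, whose torsion is bounded by the finitely many roots of unity in $K$; this contrast is what makes decomposition groups detectable.

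First I would detect the residue characteristic and the inertia. Since wild inertia is pro-$p$ and the pro-$p$ rank of $\widehat{\mathcal{O}_{K_\mathfrak{p}}^\times}$ grows with $[K_\mathfrak{p}:\mathbb{Q}_p]$ while the tame part is prime-to-$p$, each candidate subgroup exhibits a distinguished prime $p$ as the one whose pro-$p$ contribution to the abelianized inertia is large, and simultaneously a large prime-to-$p$ torsion $\mu_{N\mathfrak{p}-1}$; this assigns to each candidate its residue characteristic and residue cardinality group-theoretically. Next I would reconstruct the full decomposition group as an extension of a procyclic Frobenius quotient $\widehat{\mathbb{Z}}$ by the inertia subgroup, pinned down by the relation that a Frobenius lift acts on the tame inertia by the $N\mathfrak{p}$-power map — a purely group-theoretic condition once $p$ and $N\mathfrak{p}$ are known. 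Among all subgroups passing these local tests I would then retain only those consistent with the reciprocity description of $G_K^{\text{ab}}$ and with a Chebotarev-type density count of Frobenius elements, which forces the selected family to be exactly the decomposition groups rather than an ad hoc over-collection.

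The reason two extra solvable layers are required — the passage from $G_K^m$ to $G_K^{m+2}$ — is that the local conditions above are, at bottom, commutator and extension-class relations (the Frobenius action on inertia and the local duality pairing), and these become faithfully visible only after descending through the next two terms of the derived series: one extra step is needed to read off the abelianization of the relevant $(m{+}1)$-step local object, and a second to witness the Frobenius conjugation and the duality that rigidify the extension. Thus the recipe is to detect candidate subgroups and their residue data at level $m+2$, verify the inertia-Frobenius extension structure using levels $m+1$ and $m+2$, and finally project the surviving subgroups down to $G_K^m$; one must also check that this image is independent of the choices made upstairs.

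The step I expect to be the main obstacle is proving that the characterization is \emph{exactly} right after truncation: that the abstractly local-looking subgroups surviving at level $m+2$ are neither too few (no genuine decomposition group is lost) nor too many (no spurious subgroup is admitted). Establishing this demands a careful analysis of how much of the local cohomological structure persists modulo $G_K^{[m+2]}$, coupled with a global density argument reconciling the selected subgroups with the class-field-theoretic description of $G_K^{\text{ab}}$. Quantifying precisely why two — and not more — derived layers suffice is the crux, and is presumably exactly where the ``$+2$'' in the statement gets pinned down.
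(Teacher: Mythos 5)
First, note what you are being compared against: the paper itself gives no proof of this statement at all. It is imported verbatim as a black box (``c.f.\ Theorem 1.25 in \cite{ST1}''), so the only proof on record is Sa\"idi--Tamagawa's, and your proposal has to be judged on whether it would stand as a proof on its own. It does not, and the gap is concrete. Every fingerprint you propose to use --- the torsion subgroup $\mu(K_\mathfrak{p})$ of $\widehat{K_\mathfrak{p}^\times}$, the pro-$p$ rank equal to $[K_\mathfrak{p}:\mathbb{Q}_p]$, the Frobenius acting on tame inertia by the $N\mathfrak{p}$-power map --- is a feature of the \emph{full} local absolute Galois group $G_{K_\mathfrak{p}}$, read off from its abelianization via local class field theory. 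But the object you must characterize inside $G_K^m$ is not $G_{K_\mathfrak{p}}$, nor even its $m$-step solvable quotient $G_{K_\mathfrak{p}}^m$: it is the image $D_\mathfrak{p}G_K^{[m]}/G_K^{[m]}$, which is only a quotient of $G_{K_\mathfrak{p}}^m$ (since $D_\mathfrak{p}^{[m]} \subseteq D_\mathfrak{p} \cap G_K^{[m]}$, with no a priori equality), and whose abelianization is accordingly only a quotient of $\widehat{K_\mathfrak{p}^\times}$. Determining how much of the local structure survives this truncation --- and arranging the argument so that the surviving structure still separates decomposition groups from impostors --- is precisely the content of the theorem. Your final paragraph concedes that the exactness of the characterization after truncation, and the reason why exactly two extra derived layers suffice, are left open; but that concession is the entire theorem, not a last verification to be filled in. The Chebotarev ``consistency check'' is also circular as stated: to run a density argument about Frobenius elements you must already know which subgroups correspond to which primes, i.e.\ you must already possess the dictionary between subgroups and places that the theorem is supposed to produce group-theoretically.

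For comparison, the argument in \cite{ST1} works with the truncated objects from the start: it develops local results about the groups $G_{K_\mathfrak{p}}^m$ and about the images of decomposition groups in $G_K^m$, and it is exactly the two auxiliary levels $m+1$ and $m+2$ (i.e.\ two further abelian layers, giving access to ramification and cyclotomic data over the intermediate fields of the tower) that supply enough surviving local structure to run a Neukirch-style characterization that is provably exact at level $m$. Your instinct about the local-global philosophy behind the statement is sound, but the proposal defers the truncation analysis in which the whole difficulty --- and the ``$+2$'' --- resides, so it is a plan for a proof rather than a proof.
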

Since Theorem 2.3 in mono-anabelian, we may adapt the statement to Definition 2.2.
\begin{defn}[c.f. Definition 2.3 in \cite{MS}]
Let $m\geq 1$ be an integer. Let $G$ be a profinite group of $\text{GSC}^{m+2}$-type. We write $\widetilde{\text{Dec}}(G^m)$ for the $G^m$-set reconstructed from $G^{m+2}$ by using Theorem 2.3. Moreover, we write $\text{Dec}(G^1) := \widetilde{\text{Dec}}(G^m)/G^m$ where the $G^m$-action is given by conjugation.
\end{defn}
\begin{prop}
Let $G$ be a profinite group of $\text{GSC}^{m+2}$-type for some positive integer $m$, then the following diagram commutes:
$$
\begin{tikzcd}
\widetilde{\text{Dec}}(G^m) \arrow[r,"\sim"] \arrow[d,twoheadrightarrow,"/G^m"] & \mathscr{P}_{K_m}^{\text{fin}} \arrow[d,twoheadrightarrow,"/G_K^m"] \\
\text{Dec}(G^1) \arrow[r,"\sim"] & \mathscr{P}_K^{\text{fin}}
\end{tikzcd}
$$
where the horizontal arrows are determined by the isomorphism $\alpha_m: G \xrightarrow{\sim} G_K^m$ together with Proposition 1.5 in \cite{ST1}.
\end{prop}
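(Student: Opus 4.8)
The plan is to reduce the commutativity of the square to a single equivariance statement, together with the elementary description of the right-hand vertical map as an orbit projection. First I would unwind the top horizontal arrow. By Definition 2.4, transporting the reconstructed set $\widetilde{\text{Dec}}(G^m)$ through the isomorphism $\alpha_m: G^m\xrightarrow{\sim}G_K^m$ identifies it with the set of honest decomposition subgroups $D_{\mathfrak{P}}\leq G_K^m$ attached to the non-archimedean primes $\mathfrak{P}$ of $K_m$, and Proposition 1.5 in \cite{ST1} supplies the bijection $D_{\mathfrak{P}}\leftrightarrow\mathfrak{P}$ that constitutes the top arrow $\widetilde{\text{Dec}}(G^m)\xrightarrow{\sim}\mathscr{P}_{K_m}^{\text{fin}}$. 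In particular this records that $\mathfrak{P}\mapsto D_{\mathfrak{P}}$ is injective, so the arrow is genuinely a bijection.

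The key step is to verify that this bijection is equivariant for the conjugation action of $G^m$ on the source (equivalently, via $\alpha_m$, of $G_K^m$) and the natural action of $G_K^m=\text{Gal}(K_m/K)$ on the target. This rests on the standard identity $gD_{\mathfrak{P}}g^{-1}=D_{g\mathfrak{P}}$ for $g\in G_K^m$, valid because $K_m/K$ is Galois; hence conjugation by $\alpha_m(g)$ on decomposition subgroups corresponds under the top arrow exactly to the Galois action of $g$ on primes. I would then identify the right vertical map with an orbit projection: since $K_m/K$ is Galois with group $G_K^m$, this group acts transitively on the primes of $K_m$ lying above any fixed prime of $K$, so the fibres of the restriction map $\mathscr{P}_{K_m}^{\text{fin}}\to\mathscr{P}_K^{\text{fin}}$ (sending $\mathfrak{P}$ to the prime of $K$ below it) are precisely the $G_K^m$-orbits, giving a canonical bijection $\mathscr{P}_{K_m}^{\text{fin}}/G_K^m\xrightarrow{\sim}\mathscr{P}_K^{\text{fin}}$. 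The left vertical map is, by definition, the orbit projection onto $\text{Dec}(G^m)=\widetilde{\text{Dec}}(G^m)/G^m$.

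Combining these, the equivariant top bijection carries $G^m$-orbits bijectively onto $G_K^m$-orbits, hence descends to a bijection $\text{Dec}(G^m)\xrightarrow{\sim}\mathscr{P}_{K_m}^{\text{fin}}/G_K^m=\mathscr{P}_K^{\text{fin}}$; this is the bottom arrow, and commutativity of the square is then immediate from the construction of the induced map. The only genuinely non-formal ingredient is the first step — that the group-theoretically reconstructed set $\widetilde{\text{Dec}}(G^m)$ really matches the field-theoretic decomposition groups, injectivity of $\mathfrak{P}\mapsto D_{\mathfrak{P}}$ included — and I expect this to be the main obstacle; but it is exactly the content of Theorem 2.3 and Proposition 1.5 in \cite{ST1}, which we are entitled to invoke, after which everything reduces to formal equivariance and descent.
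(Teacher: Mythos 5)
Your proposal is correct, and it is worth noting how it relates to what the paper actually does: the paper's entire proof is the single line ``See Corollary 1.6 in \cite{ST1}'', i.e.\ it outsources the statement to a corollary of Sa\"idi--Tamagawa, whereas you derive it directly from the ingredients named in the statement itself (Proposition 1.5 in \cite{ST1} and the reconstruction underlying Definition 2.4). Your route is the standard equivariance-and-descent argument that presumably underlies that cited corollary: the identity $gD_{\mathfrak{P}}g^{-1}=D_{g\mathfrak{P}}$ makes the top bijection equivariant, and transitivity of $\mathrm{Gal}(K_m/K)$ on the primes of $K_m$ above a fixed prime of $K$ identifies the fibres of $\mathscr{P}_{K_m}^{\text{fin}}\to\mathscr{P}_K^{\text{fin}}$ with the $G_K^m$-orbits, so the bijection descends to the quotients and the square commutes by construction. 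Two points deserve emphasis. First, you correctly isolate the only non-formal input: injectivity of $\mathfrak{P}\mapsto D_{\mathfrak{P}}$ is genuinely delicate in the $m$-step solvable quotient $G_K^m$ (unlike in the full group $G_K$), and this is exactly what Proposition 1.5 in \cite{ST1} supplies, which the statement entitles you to use. Second, since $K_m/K$ is an infinite Galois extension, the transitivity of the Galois action on primes above a given prime is not the finite-level textbook statement but follows from it by the usual compactness (inverse limit) argument; your proof would be airtight with that one sentence added. In sum: your argument buys self-containedness at the cost of a paragraph, the paper's buys brevity at the cost of opacity, and the mathematical content is the same.
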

\begin{proof}
See Corollary 1.6 in \cite{ST1}.
\end{proof}
\begin{lem}
Let $m,n$ be positive integers. Let $G$ be a profinite group of $\text{GSC}^{m+n}$-type, and let $H \subset G^m$ be an open subgroup. We write $\widetilde{H}$ for the inverse image of $H$ in $G$. Then $\widetilde{H}^n$ is a profinite group of $\text{GSC}^n$-type.
\end{lem}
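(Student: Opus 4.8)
The plan is to pull the whole situation back along the defining isomorphism and reduce to a direct computation with the derived series. Since $G$ is of $\text{GSC}^{m+n}$-type, I would fix an isomorphism $G \xrightarrow{\sim} G_K^{m+n}$ for some number field $K$ and identify $G_K^{m+n} = \text{Gal}(K_{m+n}/K)$. Because $m \le m+n$, forming the maximal $m$-step solvable quotient gives $G^m \cong (G_K^{m+n})^m = G_K^m = \text{Gal}(K_m/K)$, and the canonical projection $G \twoheadrightarrow G^m$ corresponds under these identifications to the restriction map $\text{Gal}(K_{m+n}/K) \twoheadrightarrow \text{Gal}(K_m/K)$.

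Next I would translate the subgroup data into field theory. As $H \subseteq G^m$ is open, it is closed of finite index, so $H = \text{Gal}(K_m/F)$ for a unique intermediate field $K \subseteq F \subseteq K_m$ with $[F:K] = [G^m : H] < \infty$; in particular $F$ is a number field. Taking inverse images, $\widetilde{H}$ becomes $\text{Gal}(K_{m+n}/F)$. Writing $G_F = \text{Gal}(K^{\text{sep}}/F)$ and using $G_K^{[m+n]} = G_{K_{m+n}} \subseteq G_F$ (valid since $F \subseteq K_{m+n}$, and normal in $G_F$ as it is characteristic in $G_K$), this reads exactly $\widetilde{H} = G_F/G_K^{[m+n]}$.

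I would then compute the maximal $n$-step solvable quotient. Since the $i$-th derived subgroup of a quotient is the image of the $i$-th derived subgroup (with the appropriate topological closures in the profinite setting), $\widetilde{H}^{[n]} = G_F^{[n]} G_K^{[m+n]}/G_K^{[m+n]}$, and hence $\widetilde{H}^{n} = G_F/\bigl(G_F^{[n]} G_K^{[m+n]}\bigr)$. The heart of the matter is the inclusion $G_K^{[m+n]} \subseteq G_F^{[n]}$: granting it, the denominator collapses to $G_F^{[n]}$, so $\widetilde{H}^{n} \cong G_F/G_F^{[n]} = G_F^{n}$. As $F$ is a number field, $G_F$ is of NF-type, whence $G_F^{n}$ is of $\text{GSC}^{n}$-type by definition, which is the assertion.

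For the crucial inclusion I would combine two facts. First, $F \subseteq K_m$ yields $G_K^{[m]} = G_{K_m} \subseteq G_F$. Second, the derived-series identity $(G^{[a]})^{[b]} = G^{[a+b]}$ holds for any profinite group, by an immediate induction from $(G^{[a]})^{[1]} = \overline{[G^{[a]},G^{[a]}]} = G^{[a+1]}$. Applying the $n$-th derived operation to $G_K^{[m]} \subseteq G_F$ then gives $G_K^{[m+n]} = (G_K^{[m]})^{[n]} \subseteq G_F^{[n]}$, as required. I do not anticipate a serious obstacle: the computation is formal once the inclusion $F \subseteq K_m$ is recorded, and that inclusion is forced by the very definition of $H$ as a subgroup of $G^m$. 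The only points deserving care are the behaviour of derived subgroups under profinite quotients (topological closures) and checking that $G_K^{[m+n]}$ is genuinely normal in $G_F$, both of which are routine.
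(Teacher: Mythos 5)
Your proof is correct and is essentially the argument the paper intends: the paper dismisses this lemma in one line as ``an immediate consequence of infinite Galois theory and the definition of inverse image,'' and your write-up simply supplies the details behind that remark, namely the identification $\widetilde{H} = \text{Gal}(K_{m+n}/F)$ for a finite extension $F/K$ and the key inclusion $G_K^{[m+n]} = (G_K^{[m]})^{[n]} \subseteq G_F^{[n]}$ giving $\widetilde{H}^n \cong G_F^n$. Nothing in your argument deviates from this intended route, and the points you flag as needing care (closures of derived subgroups in the profinite setting, normality of $G_K^{[m+n]}$ in $G_F$) are handled correctly.
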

\begin{proof}
This is an immediate consequence of infinite Galois theory and the definition of inverse image.
\end{proof}
\begin{defn}
If $G$ is a profinite group of $\text{GSC}^{m+n}$-type for some positive integers $m,n$, and $H \subset G^m$ is an open subgroup. Then we shall write $\widetilde{\text{Dec}}(H) := \widetilde{\text{Dec}}(\widetilde{H}^n)$, and $\text{Dec}(H) := \widetilde{\text{Dec}}(\widetilde{H}^n)/\widetilde{H}^n$.
\end{defn}
\begin{prop}[c.f. Theorem 2.6 in \cite{MS}]
Let $G$ be a profinite group of $\text{GSC}^6$-type. Let $D \in \widetilde{\text{Dec}}(G^4)$. It follows from Proposition 1.1 (i) in \cite{ST1} that $D \xrightarrow{\sim} D_{\mathfrak{p}_4}$ for some uniquely determined $\mathfrak{p}_4 \in \mathscr{P}_{K_4}^{\text{fin}}$ and $\mathfrak{ p}$ is the image of $\mathfrak{p}_4$ in $\mathscr{P}_K^{\text{fin}}$. Then we can group-theoretically reconstruct the following objects starting from $D$: \par 
(i) The residue characteristic $p_{\mathfrak{p}}$ of $K_{\mathfrak{p}}$, the inertia degree of $f_\mathfrak{p}$ of $K_{\mathfrak{p}}$, the degree $d_{\mathfrak{p}} := [K_{\mathfrak{p}}:\mathbb Q_{p(D)}]$ and the absolute ramification index $e_{\mathfrak{p}}$ of $K_{\mathfrak{p}}$. \par 
(ii) The inertia subgroup $I_{\mathfrak{p}_2}$ of $D_{\mathfrak{p}_2}$ and the wild inertia subgroup $W_{\mathfrak{p}_2}$ of $I_{\mathfrak{p}_2}$. \par 
(iii) The Frobenius element $\text{Frob}_{\mathfrak{p}}$ contained in $G_{K_{\mathfrak{p}}}^{\text{unr}}$. \par 
(iv) The multiplicative group $K_{\mathfrak{p}}^{\times}$. \par 
(v) The group of roots of unity $\mu(K_{\mathfrak{p}}^{\text{ab}})$ contained in $K_{\mathfrak{p}}^{\text{ab}}$ and the cyclotome $\Lambda(K_{\mathfrak{p}}^{\text{ab}})$ associated to $K_{\mathfrak{p}}^{\text{ab}}$ as a $D$-module.
\end{prop}
\begin{proof}
C.f. Theorem 2.6 in \cite{MS}. We also use the same notations as in Theorem 2.6 in \cite{MS}.
\end{proof}
\begin{cor}
Let $G$ be a profinite group of $\text{GSC}^6$-type associated to a number field $K$, and $\mathfrak{p} \in \mathscr{P}_K^{\text{fin}}$. Then whether or not $K_\mathfrak{p}$ is abelian over $\mathbb{Q}_{p_{\mathfrak{p}}}$ can be group-theoretically determined by $D \in \widetilde{\text{Dec}}(G^4)$, for $D$ corresponds to $\mathfrak{p}$.
\end{cor}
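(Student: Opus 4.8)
The plan is to recast ``$K_{\mathfrak{p}}/\mathbb{Q}_p$ is abelian'' as an equality of indices between two subgroups of $\mathrm{Aut}(\Lambda(D))$, both of which are outputs of Theorem 2.8. Since $G$ is of $\text{GSC}^5$-type, Theorem 2.8 (with $m=1$) reconstructs the full package (i)--(x) for every element of $\widetilde{\text{Dec}}(G^3)$; the projection $G^3\twoheadrightarrow G^2$ induces a surjection $\widetilde{\text{Dec}}(G^3)\twoheadrightarrow\widetilde{\text{Dec}}(G^2)$, compatible with the bijection $\text{Dec}(G^m)\cong\mathscr{P}_K^{\text{fin}}$ of Proposition 2.5. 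So I would fix a lift $\widetilde{D}\in\widetilde{\text{Dec}}(G^3)$ of the given $D$ and extract $p:=p(D)$, $d:=d(D)=[K_{\mathfrak{p}}:\mathbb{Q}_p]$, and the profinite module $\Lambda(\widetilde{D})$ together with its reconstructed $\widetilde{D}$-action. Since whether $K_{\mathfrak{p}}/\mathbb{Q}_p$ is abelian depends only on $\mathfrak{p}$, hence only on $D$ via Proposition 2.5, it suffices to read this property off from these invariants, after which the answer descends to $D\in\widetilde{\text{Dec}}(G^2)$.

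The arithmetic input is local Kronecker--Weber: $K_{\mathfrak{p}}/\mathbb{Q}_p$ is abelian if and only if $K_{\mathfrak{p}}\subseteq\mathbb{Q}_p(\mu_\infty)$, where $\mu_\infty$ is the group of all roots of unity. I would pass to the cyclotomic character. Because $K_{\mathfrak{p}}(\mu_N)/K_{\mathfrak{p}}$ is abelian for all $N$, one has $\mu_\infty\subseteq(K_{\mathfrak{p}})_1\subseteq(K_{\mathfrak{p}})_3$, so the reconstructed $\mu(\widetilde{D})$ is all of $\mu_\infty$ and $\Lambda(\widetilde{D})$ is the full Tate module $\widehat{\mathbb{Z}}(1)$; thus $\mathrm{Aut}(\Lambda(\widetilde{D}))\cong\widehat{\mathbb{Z}}^{\times}$ and the reconstructed action is a homomorphism $\chi_K\colon\widetilde{D}\to\widehat{\mathbb{Z}}^{\times}$ with $\operatorname{im}(\chi_K)=\text{Gal}(K_{\mathfrak{p}}(\mu_\infty)/K_{\mathfrak{p}})$. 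Restriction to $\mathbb{Q}_p(\mu_\infty)$ identifies $\operatorname{im}(\chi_K)$ with the subgroup $\text{Gal}(\mathbb{Q}_p(\mu_\infty)/(K_{\mathfrak{p}}\cap\mathbb{Q}_p(\mu_\infty)))$ of $\text{Gal}(\mathbb{Q}_p(\mu_\infty)/\mathbb{Q}_p)$, whence
\[
\bigl[\,\text{Gal}(\mathbb{Q}_p(\mu_\infty)/\mathbb{Q}_p):\operatorname{im}(\chi_K)\,\bigr]=[\,K_{\mathfrak{p}}\cap\mathbb{Q}_p(\mu_\infty):\mathbb{Q}_p\,].
\]
Therefore $K_{\mathfrak{p}}/\mathbb{Q}_p$ is abelian precisely when this index equals $d$.

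It then remains to exhibit both subgroups of $\widehat{\mathbb{Z}}^{\times}$ group-theoretically. The subgroup $\operatorname{im}(\chi_K)$ is simply the image of the reconstructed $\widetilde{D}$-action on $\Lambda(\widetilde{D})$; here one checks that the (global) cyclotomic character, being abelian, kills $G_K^{[1]}\supseteq G_K^{[m]}$, so its restriction to the decomposition group factors through every $D_{\mathfrak{p}_m}$ without shrinking its image, which is why the reconstructed action recovers $\text{Gal}(K_{\mathfrak{p}}(\mu_\infty)/K_{\mathfrak{p}})$ exactly. The ambient group $\text{Gal}(\mathbb{Q}_p(\mu_\infty)/\mathbb{Q}_p)$ is the explicit subgroup of $\widehat{\mathbb{Z}}^{\times}=\prod_\ell\mathbb{Z}_\ell^{\times}$ consisting of the full $\mathbb{Z}_p^{\times}$ on the $p$-primary component of $\Lambda(\widetilde{D})$ together with the closure of $\langle p\rangle$ (Frobenius acting as multiplication by $p$) on the prime-to-$p$ components; this depends only on $p=p(D)$, using the intrinsic splitting of $\Lambda(\widetilde{D})$ into its $p$-part and prime-to-$p$ part and the data $\text{Frob}(D)$, $f(D)$, $I(D)$. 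With both subgroups located inside the same $\widehat{\mathbb{Z}}^{\times}$, the displayed index is a well-defined finite number and the criterion ``index $=d$'' is genuinely group-theoretic.

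The main obstacle I expect is this last identification: certifying, purely from $p(D)$ and the reconstructed inertia/Frobenius data, that $\text{Gal}(\mathbb{Q}_p(\mu_\infty)/\mathbb{Q}_p)$ occupies the prescribed position inside $\mathrm{Aut}(\Lambda(\widetilde{D}))$ relative to $\operatorname{im}(\chi_K)$, so that their index is computed correctly. Concretely this requires knowing that arithmetic Frobenius acts on prime-to-$p$ roots of unity as the $p$-power map and that the inertial part fills $\mathbb{Z}_p^{\times}$ on the $p$-primary part — both visible through $\text{Frob}(D)$, $f(D)$ and $I(D)$ of Theorem 2.8 — while keeping the comparison independent of the auxiliary lift $\widetilde{D}$ and of any choice of isomorphism $\Lambda(\widetilde{D})\cong\widehat{\mathbb{Z}}$. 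Once this bookkeeping is settled, comparing the reconstructed index with $d(D)$ decides abelianness and proves the corollary.
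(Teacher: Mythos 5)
Your proof is correct, but it follows a genuinely different route from the paper's. The paper's proof is essentially a citation: from the cyclotome $\Lambda(D)$ of Theorem 2.8 it invokes Lemma 4.6 of \cite{Ho4} to reconstruct the topological field $\mathbb{Q}_{p(D)}$ together with the norm map $\text{Nm}\colon k^{\times}(D)\to\mathbb{Q}_{p(D)}^{\times}$, and then applies Proposition 4.9 of \cite{Ho4}, i.e.\ the local class field theory criterion detecting abelianness through the index of the norm group $\text{Nm}(k^{\times}(D))$ in $\mathbb{Q}_{p(D)}^{\times}$. You stay entirely on the Galois-theoretic side: local Kronecker--Weber turns abelianness into the containment $K_{\mathfrak{p}}\subseteq\mathbb{Q}_p(\mu_\infty)$, which you test by comparing $d(D)$ against the index of the image of the reconstructed $D$-action on $\Lambda$ inside the subgroup $\mathbb{Z}_p^{\times}\times\overline{\langle p\rangle}\subseteq\text{Aut}(\Lambda)$ realizing $\text{Gal}(\mathbb{Q}_p(\mu_\infty)/\mathbb{Q}_p)$; your index criterion is exactly the mirror image, under the reciprocity map, of the norm-index criterion the paper imports. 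What your route buys: independence from \cite{Ho4} --- you use only items (ii), (ix), (x) of Theorem 2.8 plus classical ramification theory of cyclotomic extensions of $\mathbb{Q}_p$ --- and more careful index bookkeeping, since Theorem 2.8 with $m=1$ supplies data for $D\in\widetilde{\text{Dec}}(G^3)$ rather than $G^2$, and your lift through $\widetilde{\text{Dec}}(G^3)\twoheadrightarrow\widetilde{\text{Dec}}(G^2)$ repairs a discrepancy the paper itself glosses over. What the paper's route buys: the intermediate objects it reconstructs ($\mathbb{Q}_{p(D)}$ as a topological field, the norm map) are needed again immediately afterwards --- Remark 2.10 defines $\underline{D}=\mathbb{Q}_{p(D)}^{\times}/\text{Nm}(k^{\times}(D))$ and Proposition 3.1 builds on the reconstructed $\mathbb{Q}_{p(D)}$ --- so its appeal to \cite{Ho4} does double duty for the rest of the paper.

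One further remark: the ``main obstacle'' you flag at the end is not actually an obstacle. The subgroup $\mathbb{Z}_p^{\times}\times\overline{\langle p\rangle}$ is intrinsically defined from $p(D)$ and the canonical primary decomposition $\Lambda=\prod_\ell\Lambda_\ell$; no choice of isomorphism $\Lambda\cong\widehat{\mathbb{Z}}$ is involved, and none of $\text{Frob}(D)$, $f(D)$, $I(D)$ is needed. The assertion that this subgroup coincides with the image of $\text{Gal}(\mathbb{Q}_p(\mu_\infty)/\mathbb{Q}_p)$ acting faithfully on $\mu_\infty$ is a statement about the reference model --- $\mathbb{Q}_p(\mu_{p^\infty})/\mathbb{Q}_p$ has Galois group $\mathbb{Z}_p^{\times}$, the prime-to-$p$ cyclotomic tower is unramified with Frobenius acting as multiplication by $p$, and the two towers are linearly disjoint over $\mathbb{Q}_p$ --- which one verifies once and for all; in the mono-anabelian framework such a verification in the model is exactly what legitimizes the group-theoretic recipe, so nothing further needs to be ``certified'' from the reconstructed data.
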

\begin{proof}
This follows from Proposition 2.8 (v) in \cite{MS} together with Lemma 4.6 and Proposition 4.9 in \cite{Ho4}.
\end{proof}
\begin{rem}
In fact, by local class field theory, we can even reconstruct the Galois group $\text{Gal}(K_{\mathfrak{p}}/\mathbb{Q}_{p_D})$ group-theoretically provided that $K_{\mathfrak{p}}$ is abelian. The Galois group is isomorphic to $\mathbb{Q}_{p_D}^{\times}/\text{Nm}(k^{\times}(D^1))$ (c.f. Definition 4.7 in \cite{Ho4} for the definition of $\text{Nm}(k^{\times}(D^1))$), which is group-theoretically determined by $D$. We shall write $\underline{D}$ for this finite group.
\end{rem}
\section{Reconstruction of Abelian Number Fields}
Let $K$ be a finite abelian extension of $\mathbb Q$ for the rest of this section, and $G$ be a profinite group of $\text{GSC}^6$-type associated to $K$. We aim to prove Theorem 1.2 in this section.
\begin{prop}
Let $D \in \widetilde{\text{Dec}}(G^4)$. There exists a group-theoretic reconstruction of a topological field $k(D)$ starting from $D$ such that
$$
k(D) \xrightarrow{\sim} K_{\mathfrak{p}}.
$$
\end{prop}
\begin{proof}
It follows from Proposition 2.8 (v) and Lemma 4.6 (i), that we can group-theoretically reconstruct the topological field $\mathbb Q_{p_D}$. \par 
Let $n \geq 1$ be an integer, consider $\mu(D^1)[n]$ as a sub-$D^1$-module. We shall write
$$
\mathcal{A}_n(D) := \mathbb Q_{p_D}[\mu(D^1)[n]]
$$
for the group-ring over $\mathbb Q_{p_D}$. It follows from Maschke's theorem and Wedderburn-Artin theorem that
$$
\mathcal{A}_n(D) \xrightarrow{\sim} \prod_d \mathbb Q_{p_D}(\zeta_d)
$$
\end{proof}
for some integer $d$, and each component of $\mathcal{A}_n(D)$ is a finite field extension of $\mathbb Q_{p_D}$ obtained by adjoining the primitive $d$-th roots of unity (and notice that if $d|(p_D-1)$, the extension is trivial). In particular, if $\mathcal{A}_n(D)$ always admits a quotient $\mathbb Q_{p_D}$-algebra isomorphic to $\mathbb Q_{p_D}(\zeta_n)$ (this can be characterised as the $\mathbb Q_{p_D}$-quotient algebra of $\mathcal{A}_n(D)$ which is a finite field extension of $\mathbb Q_{p_D}$ of maximal degree), we shall denote by $F_n(D)$ for this quotient. Thus, we can conclude that there is a group-theoretic reconstruction of the fields $F_n(D)$ for each $n \geq 1$ as field extensions over $\mathbb Q_{p_D}$. \par 
Now by local class field theory for $\mathbb Q_{p_D}$, every finite abelian extension of $\mathbb Q_{p(D)}$ is contained in $F_n(D)$ for some $n$. Hence the finite group $\underline{D}$ is a finite quotient of $\text{Gal}(F_{\infty}(D)/\mathbb Q_{p_D})$ where $F_{\infty}(D) := \bigcup_{n \geq 1} F_n(D)$. Write
$$
k(D) := F_{\infty}(D)^{\text{ker}(\text{Gal}(F_{\infty}(D)/\mathbb Q_{p_D}) \twoheadrightarrow \underline{D})}
$$
where $\underline{D}$ is defined in Remark 2.10. The desired isomorphism $k(D) \xrightarrow{\sim} K_{\mathfrak{p}}$ follows from Proposition 2.5 and Proposition 2.8 together with the various constructions involved.
\begin{defn}
Let $v \in \text{Dec}(G^1)$ be the image of $D \in \widetilde{\text{Dec}}(G^4)$ (by Proposition 2.5). We define: \par 
\begin{itemize}
\item $p(v) = p(D)$. 
\item $k(v) := k(D)$ (c.f. Proposition 3.1, and Definition 3.1 in \cite{MS}).
\end{itemize}
\end{defn}

\begin{prop}
There is a group-theoretic reconstruction of a field $F_{\infty}(G)$ from $G$ such that there is an isomorphism of fields
$$
F_{\infty}(G) \xrightarrow{\sim} \mathbb{Q}^{\text{ab}}
$$
which is equivariant w.r.t $\alpha: G \xrightarrow{\sim} G_K^6$.
\end{prop}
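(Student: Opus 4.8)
The plan is to realise $\mathbb{Q}^{\text{ab}}$ as the subfield of a \emph{single} local reconstruction generated by the prime field and the roots of unity, and then to equip it with the correct global Galois action by reconstructing the cyclotomic character.

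First I would fix any $D \in \widetilde{\text{Dec}}(G^2)$ and invoke Proposition 3.1 to obtain the field $F_\infty(D)$, which is isomorphic to $\mathbb{Q}_{p(D)}^{\text{ab}}$. As this field has characteristic $0$, its prime field is canonically isomorphic to $\mathbb{Q}$, and by Theorem 2.8(ix) the reconstructed torsion group $\mu(D)$ is exactly the full group of roots of unity lying in $F_\infty(D)^{\times}$. I would then set
$$ F_\infty(G) := \mathbb{Q}\bigl(\mu(D)\bigr) \subseteq F_\infty(D), $$
the subfield generated over the prime field by $\mu(D)$. The local Kronecker--Weber description already used in Proposition 3.1 gives $\mathbb{Q}(\mu(D)) \cong \mathbb{Q}(\mu_\infty) = \mathbb{Q}^{\text{ab}}$, so $F_\infty(G)$ is a genuine field abstractly isomorphic to $\mathbb{Q}^{\text{ab}}$, not merely a Galois group.

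The substance of the statement is the equivariance, i.e. reconstructing the action of $G^1$ (through which $G$ acts, via the quotient $G \twoheadrightarrow G^1$) on $\mu(D)$ --- equivalently the global cyclotomic character $\chi \colon G^1 \to \widehat{\mathbb{Z}}^{\times}$. The crucial simplification is that $\widehat{\mathbb{Z}}^{\times} = \text{Aut}(\mu(D))$ is \emph{canonical}, so the values of $\chi$ are intrinsic and no synchronisation of the cyclotomes attached to different primes is needed to make sense of them. I would prescribe $\chi$ on Frobenius elements: for $D' \in \widetilde{\text{Dec}}(G^2)$ and a rational prime $\ell \neq p(D')$, Theorem 2.8 supplies $\text{Frob}(D')$ together with the residue cardinality $q(D') = p(D')^{f(D')}$, and I would declare $\text{Frob}(D')$ to act on the $\ell$-primary part of $\mu(D)$ by the $q(D')$-th power map. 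Since the Frobenii at primes not lying over $\ell$ already exhaust a dense set of residues in $\mathbb{Z}_\ell^{\times}$ (Dirichlet) and a dense set in $G^1$ (Chebotarev), this prescription extends by continuity to at most one continuous homomorphism $\chi$, which then acts on $F_\infty(G) = \mathbb{Q}(\mu(D))$ by field automorphisms fixing the prime field.

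The hard part is to verify that the group-theoretically prescribed $\chi$ is genuinely the arithmetic cyclotomic character under $\alpha$, so that the abstract $G$-action agrees with the Galois action of $G_K^5$ on $\mathbb{Q}^{\text{ab}} \subseteq K_5$. This splits into two inputs. The density statement --- that the decomposition groups topologically generate $G^1$, equivalently that no nontrivial abelian extension of $K$ splits completely at all non-archimedean places --- guarantees \emph{uniqueness} of a continuous homomorphism with the prescribed Frobenius values. \emph{Existence}, and the identification with $\chi$, then follows from the classical fact that in the arithmetic situation $\text{Frob}_v$ acts on the prime-to-$p_v$ roots of unity by $x \mapsto x^{q_v}$; this shows the true cyclotomic character satisfies our prescription and hence equals $\chi$. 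The main technical obstacle is precisely checking that the separately reconstructed local recipes are mutually consistent --- that they glue to one homomorphism valued in the single canonical $\widehat{\mathbb{Z}}^{\times}$ --- and once this is secured the identity on $\mu_\infty$ induces the required $G$-equivariant isomorphism $F_\infty(G) \xrightarrow{\sim} \mathbb{Q}^{\text{ab}}$.
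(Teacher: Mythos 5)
Your construction is essentially correct, but it takes a genuinely different route from the paper. The paper never fixes a single local cyclotome: it first reconstructs the \emph{global} cyclotome $\mu(G^{\text{ab}})$ (via Definition 3.3 in \cite{MS} together with Theorem 2.8) and the prime field $\mathbb{Q}$, embeds $\mu(G^{\text{ab}})$ into the product $\prod_{v \in \text{Dec}(G^{\text{ab}})} F_{\infty}(v)$ of \emph{all} the local cyclotomic fields, and defines $F_{\infty}(G)$ as the subring of that product generated by $\mu(G^{\text{ab}})$ over $\mathbb{Q}$; the $G$-action and the equivariance are then inherited from the cited machinery (Theorem 3.5 in \cite{MS}), so the local-global synchronisation is outsourced. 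You instead work inside one local reconstruction $F_{\infty}(D)$, set $F_{\infty}(G) := \mathbb{Q}(\mu(D))$, and rebuild the global cyclotomic character by hand, with uniqueness from Chebotarev and existence from the classical fact that Frobenius acts on prime-to-$p$ roots of unity by the $q$-th power map. Your key observation --- that $\text{Aut}(\mu(D)) = \widehat{\mathbb{Z}}^{\times}$ is canonical, so the values of $\chi$ need no synchronisation of cyclotomes across primes, and any group isomorphism $\mu(D) \xrightarrow{\sim} \mu_{\infty}$ then induces an equivariant field isomorphism --- is correct and is exactly what makes the single-$D$ construction legitimate. What your route buys is independence from the global cyclotome of \cite{MS}; what the paper's route buys is a choice-free output, which matters later (Theorem 3.4 reuses $\mu(G^{\text{ab}})$). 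Two points of care: (1) Frobenius elements are not honest elements of $G^{1}$, since every finite prime ramifies in $K^{\text{ab}}/K$; so ``Frobenii are dense in $G^{1}$'' should be replaced by the statement that, for each prime $\ell$, the images in $G^{1}$ of the decomposition groups at primes of residue characteristic different from $\ell$ topologically generate $G^{1}$ (same density argument, as the primes above $\ell$ have density zero), and your prescription should be phrased on decomposition groups: kill the image of $I(D')$ and send the Frobenius coset to the $q(D')$-th power map on the $\ell$-primary parts. (2) Alternatively, the cyclotomic-character reconstruction you carry out by hand is available as Theorem 1.26 in \cite{ST1}, which the paper itself invokes in the proof of Theorem 3.4.
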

\begin{proof}
Notice that by Definition 3.5 in \cite{MS} together with Proposition 2.8, there is a group-theoretic reconstruction of $\mu(G^{1}) \cong \mu_{\infty}$ from $G$. It follows from Definition 2.2 in \cite{Ho2} (together with the various definitions in this paper involved), that there is a group-theoretic reconstruction of the prime subfield $\mathbb{Q}$ from $G$. \par 
By Proposition 3.1, together with the notion of Definition 3.2, we write
$$
F_{\infty}(v) := F_{\infty}(D)
$$
for some $v \in \text{Dec}(G^{1})$, where $v$ is the image of $D \in \widetilde{\text{Dec}}(G^4)$ in $\text{Dec}(G^1)$. Then one verifies immediately that we have the containment
$$
\mu(G^{1}) \subset \prod_{v \in \text{Dec}(G^1)} F_{\infty}(v)
$$
hence we write
$$
F_{\infty}(G) \subset \prod_{v \in \text{Dec}(G^{1})} F_{\infty}(v)
$$
for the subring of $\prod_{v \in \text{Dec}(G^{1})} F_{\infty}(v)$ generated by $\mu(G^{1})$ over $\mathbb{Q}$. Then this proposition follows immediately from the constructions above together with Theorem 3.7 in \cite{MS}.
\end{proof}
In this case, we shall write $\Gamma:= \text{Aut}_{\text{field}}(F_{\infty}(G))$, which is isomorphic to $G_{\mathbb{Q}}^{\text{ab}} \xrightarrow{\sim}\widehat{\mathbb{Z}}^{\times}$. Moreover, by considering the cyclotomic character, we obtain a natural map $G^{1} \to \Gamma$. \par

\begin{thm}
There is a group-theoretic reconstruction to a field $F(G)$ isomorphic to $K$.    
\end{thm}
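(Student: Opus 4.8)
The plan is to exploit the Kronecker--Weber theorem: since $K$ is abelian over $\mathbb{Q}$, we have $K \subset \mathbb{Q}^{\text{ab}}$, so that $K = (\mathbb{Q}^{\text{ab}})^{H}$ where $H := \text{Gal}(\mathbb{Q}^{\text{ab}}/K)$ is an open subgroup of $\text{Gal}(\mathbb{Q}^{\text{ab}}/\mathbb{Q}) \cong \widehat{\mathbb{Z}}^{\times}$. By Proposition 3.4 we have already reconstructed a field $F_{\infty}(G) \cong \mathbb{Q}^{\text{ab}}$ equivariantly with respect to $\alpha: G \xrightarrow{\sim} G_K^5$, together with its automorphism group $\Gamma \cong \widehat{\mathbb{Z}}^{\times}$ and the natural (cyclotomic) map $c: G^{\text{ab}} \to \Gamma$. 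Thus it suffices to cut out, purely group-theoretically, the open subgroup of $\Gamma$ corresponding to $H$; I will take it to be the image of $c$, and then define $F(G) := F_{\infty}(G)^{\text{im}(c)}$.

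The key step is the identification $\text{im}(c) = H$ under the equivariant isomorphism. Since $G^{\text{ab}}$ is profinite, hence compact, its continuous image in $\Gamma$ is automatically closed, so the fixed field is well-defined. To compute the image, recall that under $\alpha$ the map $c$ is the cyclotomic character of $G_K$ acting on $\mu(G^{\text{ab}}) \cong \mu_{\infty}$, whose image is precisely $\text{Gal}(K(\zeta_{\infty})/K)$. This is the step where the hypothesis that $K$ is abelian over $\mathbb{Q}$ enters in an essential way: because $K \subset \mathbb{Q}(\zeta_{\infty})$, we have $K(\zeta_{\infty}) = \mathbb{Q}(\zeta_{\infty}) = \mathbb{Q}^{\text{ab}}$, whence $\text{im}(c) = \text{Gal}(\mathbb{Q}^{\text{ab}}/K) = H$. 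For a general number field this procedure would recover only $K \cap \mathbb{Q}^{\text{ab}}$, which is exactly why the method is restricted to abelian $K$.

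Granting this identification, the Galois correspondence for $\mathbb{Q}^{\text{ab}}/\mathbb{Q}$ gives $F(G) = F_{\infty}(G)^{H} \cong (\mathbb{Q}^{\text{ab}})^{\text{Gal}(\mathbb{Q}^{\text{ab}}/K)} = K$, the isomorphism being induced by the equivariant one of Proposition 3.4, so that $F(G)$ is the desired group-theoretic reconstruction of $K$. I expect the main obstacle to be verifying cleanly that $c$ really has image equal to $H$ rather than a proper subgroup, that is, pinning down that the group-theoretically reconstructed cyclotomic map agrees with the genuine cyclotomic character of $G_K$ and that the latter surjects onto $\text{Gal}(\mathbb{Q}^{\text{ab}}/K)$. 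This requires carefully tracking the equivariance of the identifications $\mu(G^{\text{ab}}) \cong \mu_{\infty}$ and $\Gamma \cong \widehat{\mathbb{Z}}^{\times}$ through Proposition 3.4 and Theorem 2.8; once the image is correctly identified, the remaining verification is a formal application of infinite Galois theory.
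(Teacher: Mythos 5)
Your proposal is correct, but it takes a genuinely different and more economical route than the paper. The paper first computes a finite level: it uses the reconstructed local fields $k(v)$ (Proposition 3.1, Corollary 2.9) and their ramification filtrations to produce an integer $n_G$, identified with the product of the local conductor exponents of $K$, so that $F_{n_G}(G) \cong \mathbb{Q}(\zeta_{n_G})$ is the \emph{minimal} cyclotomic field containing $K$; it then cuts $K$ out of this finite cyclotomic field as the fixed field under $G^{\text{ab}}/H_{\text{cycl}}$, where $H_{\text{cycl}} = G^{\text{ab}} \times_{\Gamma} \ker(\chi_{\text{mod}-n_G})$ is precisely the kernel of the mod-$n_G$ cyclotomic character, so that $G^{\text{ab}}/H_{\text{cycl}}$ is its image, namely $\text{Gal}(\mathbb{Q}(\zeta_{n_G})/K)$. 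You instead work directly at infinite level: you take the image of the full cyclotomic character $c: G^{\text{ab}} \to \Gamma$ and the fixed field $F_{\infty}(G)^{\text{im}(c)}$, using Kronecker--Weber to see that $\text{im}(c) = \text{Gal}(\mathbb{Q}^{\text{ab}}/K)$. Both arguments hinge on the same two pillars --- the equivariant reconstruction $F_{\infty}(G) \cong \mathbb{Q}^{\text{ab}}$ of Proposition 3.3 (which you mislabel as 3.4, a harmless slip) and the group-theoretic cyclotomic character of Theorem 1.26 in \cite{ST1} --- and on the same key identity, that the image of the cyclotomic character of $G_K$ in the Galois group of a cyclotomic field containing $K$ is exactly the subgroup fixing $K$. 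What your version buys is brevity and independence from the local theory at this step: you need neither the local field reconstructions nor the conductor computation, only compactness of $G^{\text{ab}}$ (to ensure $\text{im}(c)$ is closed) and infinite Galois theory; your remark that for general $K$ the construction returns $K \cap \mathbb{Q}^{\text{ab}}$ also isolates cleanly where abelianness is used. What the paper's version buys is explicit finite data as a byproduct: the conductor $n_G$ of $K$ and the minimal cyclotomic field $\mathbb{Q}(\zeta_{n_G}) \supset K$, with the Galois descent then happening inside a finite extension rather than inside $\mathbb{Q}^{\text{ab}}$.
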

\begin{proof}
It follows from Proposition 3.1, that we can reconstruct the fields $k(v)$ for any $v \in \text{Dec}(G^1)$. Since we have field structure, we write
$$
n_G := \prod_{v \in \text{Dec}(G^1)} \mathfrak{f}(v)
$$
where $\mathfrak{f}(v)$ is defined to be $s(v) + 1$ where $s(v)$ is the maximal integer such that the $s(v)$-th ramification group of $\text{Gal}(k(v)/\mathbb{Q}_{p(v)})$ in lower numbering is non-trivial. Since we have reconstructed the field structures on $k(v)$ and $\mathbb{Q}_{p(v)}$ group-theoretically (c.f. Corollary 2.9 and Proposition 3.1), the integer $s(v)$ is also regarded as group-theoretically determined by $D$. Hence the integer $n(G)$ can be group-theoretically determined by $G$. \par 
Then we write 
$$
F_n(G) := \mathbb{Q}(\mu(G^{1})[n])
$$
where $\mu(G^{1})$ (c.f. Definition 3.5 in \cite{MS} and Proposition 3.6 in \cite{Ho1}), which is isomorphic to $\mu_{\infty} \cong \mathbb{Q}/\mathbb{Z}(1)$. Recall that the cyclotomic character $\chi: G^{\text{ab}} \to \Gamma$ can be group-theoretically reconstructed from $G^3$ by Theorem 1.26 in \cite{ST1}, hence we can also reconstruct the mod-$n_G$ cyclotomic character, hence we have the map
$$
\chi_{\text{mod}-n_G}: \Gamma \twoheadrightarrow (\mathbb{Z}/n_G\mathbb{Z})^{\times},
$$
also we have the natural identification
$$
F_{n}(G) = F_{\infty}(G)^{\text{ker}(\chi_{\text{mod}-n_G})}.
$$
Next, we define
$$
H_{\text{cycl}} := G^{1} \times_{\Gamma} \text{ker}(\chi_{\text{mod}-n_G}).
$$
One verifies that $H_{\text{cycl}}$ is an open subgroup of $G^{1}$, we shall define
$$
F(G) := F_{n}(G)^{G^{1}/H_{\text{cycl}}}.
$$
One verifies easily that $F(G) \xrightarrow{\sim} K$. More precisely, this follows from that $n := n_G$ coincide with the product of the local conductor exponents of $K$, hence $\mathbb{Q}(\zeta_n)$ is the minimal cyclotomic field containing $K$ by Proposition 6.5 and Proposition 6.7 in Chapter VI of \cite{ANT}. One then verifies immediately that $\mathbb{Q}(\zeta_n)$ and $F_n(G)$ are isomorphic, hence the result follows.
\end{proof}

\bibliography{ref.bib}
\bibliographystyle{alphaurl}
\end{document}